\documentclass[a4paper, reqno,12pt]{amsart}

\usepackage{tikz, tikz-3dplot}
\usepackage{amsmath, amsthm, amssymb,graphics }

\theoremstyle{plain}
\newtheorem{te}{Theorem}
\newtheorem{lem}[te]{Lemma}

\newtheorem{pr}[te]{Proposition}

\newtheorem{con}[te]{Conjecture}
\theoremstyle{remark}

\newtheorem*{ack*}{Acknowledgment}

\textwidth16.5cm
\topmargin0cm
\oddsidemargin0cm
\evensidemargin0cm
\textheight22.5cm

\def\t{{\theta}}

\def\R{{\mathbb R}}

\def\C{{\mathbb C}}
\def\S{{\mathbb S}}

\def\A{{\mathbb A}}

\def\nint{\mathop{\diagup\kern-13.0pt\int}}

\def\Cc{{\mathcal C}}\def\Nc{{\mathcal N}}

\def\Pc{{\mathcal P}}

\def\emph#1{{\it #1}}

\begin{document}
		\author{Jean Bourgain}
		\address{School of Mathematics, Institute for Advanced Study, Princeton NJ}
		\email{bourgain@math.ias.edu}
		\author{Ciprian Demeter}
		\address{Department of Mathematics, Indiana University,  Bloomington IN}
		\email{demeterc@indiana.edu}
		\author{Dominique Kemp}
		\address{Department of Mathematics, Indiana University,  Bloomington IN}
		\email{dekemp@umail.iu.edu}
		\thanks{The first two authors are partially supported by the Collaborative Research NSF grant DMS-1800305.  }

		\title[Decouplings for real analytic surfaces of revolution]{Decouplings for real analytic surfaces of revolution}\maketitle

\begin{abstract}
We extend the decoupling results of the first two authors to the case of real analytic surfaces of revolution in $\R^3$. New examples of interest include the torus and the perturbed  cone.	
\end{abstract}
\maketitle

\section{Background and the main result}

Let $$S=\{(\xi_1,\xi_2,g(\xi_1,\xi_2)):\;(\xi_1,\xi_2)\in [-1,1]^2\}$$
be a smooth, compact  surface in $\R^3$, given by the graph of the function $g$. For each $0<\delta<1$ let $\Nc_\delta(S)$ be the $\delta$-neighborhood of $S$.

Given a function $f:\R^3\to\C$ and a set $\tau\subset \R^3$, we denote by $f_\tau$ the Fourier restriction of $f$ to $\tau$.
%Let $K(\xi_1,\xi_2)$ denote the Gaussian curvature of $S$ at the point $(\xi_1,\xi_2,g(\xi_1,\xi_2))$.

In \cite{BD3}, \cite{BD4}, the first two authors proved the following result.
\begin{te}
\label{4}	
Assume $S$ has everywhere nonzero Gaussian curvature.  Let $\Pc_\delta(S)$ be a partition of $\Nc_\delta(S)$ into near rectangular boxes $\tau$ of dimensions $\sim \delta^{1/2}\times \delta^{1/2} \times \delta$. Then for each $f$ Fourier supported in $\Nc_\delta(S)$ and for $2\le p\le 4$ we have
\begin{equation}
\label{1}
\|f\|_{L^p(\R^3)}\lesssim_{\epsilon}(\delta^{-1})^{\frac12-\frac1p+\epsilon}(\sum_{\tau\in\Pc_\delta(S)}\|f_\tau\|^p_{L^p(\R^3)})^{1/p}.
\end{equation}
Moreover, if Gaussian curvature is positive then
\begin{equation}
\label{2}
\|f\|_{L^p(\R^3)}\lesssim_{\epsilon}\delta^{-\epsilon}(\sum_{\tau\in\Pc_\delta(S)}\|f_\tau\|^2_{L^p(\R^3)})^{1/2}.
\end{equation}
\end{te}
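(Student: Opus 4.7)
The strategy is to reduce to decoupling on two model quadrics and invoke the already known Bourgain--Demeter decoupling theorems. First I would use a smooth partition of unity in the frequency variable to split $f$ into pieces Fourier-supported over small caps of $S$ of some fixed diameter $\eta$ independent of $\delta$. Since the unit normal varies little on each cap, the $\delta$-neighborhoods of distinct caps are essentially disjoint in the normal direction, so this localization costs only constants. On a fixed cap, after translating, rotating, and diagonalizing the Hessian of $g$, the surface becomes the graph of $Q(\xi_1,\xi_2)+h(\xi_1,\xi_2)$ where $Q$ is a nondegenerate quadratic form and $h$ has vanishing $2$-jet at the base point. The nonzero Gaussian curvature hypothesis forces $Q$ to be, up to sign and rescaling, either the elliptic paraboloid $\xi_1^2+\xi_2^2$ (when the curvature is positive) or the hyperbolic paraboloid $\xi_1^2-\xi_2^2$. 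The associated affine change of variables maps $\Pc_\delta(S)$ to an analogous near-rectangular partition of the $\delta$-neighborhood of the model, up to constants in the side lengths.

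Next I would use parabolic rescaling combined with induction on scales to pass from decoupling for the pure quadric to decoupling for the perturbed graph. Rescaling a $\rho$-cap of $S$ back to a unit piece preserves the quadratic part while damping $h$ by a factor of $\rho^{1/2}$, so the cubic remainder stays negligible at all relevant scales. Iterating between scales $1$ and $\delta$ transfers the decoupling constant of the model to the decoupling constant of $S$ at the cost only of an $\epsilon$-loss in $\delta^{-1}$. For the model itself, (\ref{2}) with $Q=\xi_1^2+\xi_2^2$ is the $\ell^2$ decoupling theorem for the paraboloid, and (\ref{1}) with $Q=\xi_1^2-\xi_2^2$ in the range $2\le p\le 4$ is the $\ell^p$ decoupling theorem for the hyperbolic paraboloid. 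Estimate (\ref{1}) in the elliptic case then follows from (\ref{2}) by H\"older's inequality.

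The main obstacle is the hyperbolic branch. The bilinear Kakeya--Brascamp--Lieb inequalities that drive the multilinear step in the induction are much more delicate when the two principal curvatures have opposite signs, because the appropriate transversality condition can fail along entire lines (the two rulings of the saddle) rather than only at isolated points. One also has to verify that the cubic remainder $h$ does not spoil bilinear transversality after the repeated rescalings in the induction. This is the reason $p$ must be capped at $4$ in (\ref{1}) and why the stronger $\ell^2$ statement (\ref{2}) is available only in the positive curvature case.
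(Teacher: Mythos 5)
Your outline reproduces the approach of \cite{BD3},\,\cite{BD4} that the paper records in the paragraph after Theorem~\ref{4}: prove \eqref{2} and \eqref{1} for the model elliptic and hyperbolic paraboloids, then transfer to a general nonzero-curvature graph by local diagonalization of the Hessian, parabolic rescaling and induction on scales, with the Taylor remainder beyond second order damped by $\rho^{1/2}$ at each rescaling step (this is exactly the paper's remark that the constants depend on the $C^3$ norm of $g$). The paper itself cites the theorem rather than reproving it, so there is no internal proof to compare against beyond that sketch, and your plan is consistent with it.

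One small inaccuracy in your concluding paragraph: the cap $2\le p\le 4$ in \eqref{1} is not caused by the hyperbolic branch. It is the critical exponent for surfaces in $\R^3$ in both settings; in the elliptic case \eqref{1} is simply \eqref{2} followed by H\"older, and \eqref{2} itself is only valid for $p\le 4$. What is special to negative curvature is that the $\ell^2$ statement \eqref{2} fails outright, and the paper locates the obstruction not in a breakdown of the bilinear or Brascamp--Lieb machinery but in an explicit counterexample: the surface contains lines, and Proposition~\ref{5} (one-dimensional lower bound along a line, applied with $N\sim\delta^{-1/2}$) already rules out any gain over the trivial $\ell^2$ constant. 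The multilinear/transversality subtleties you mention are genuine technical features of the BD4 argument, but they are not the reason the theorem is stated with $\ell^p$ instead of $\ell^2$ in the hyperbolic case.
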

Inequality \eqref{2} is referred to as an $l^2$- decoupling.
It is false for $p>4$.

Inequality \eqref{1} is an $l^p$-decoupling. Since there are roughly $\delta^{-1}$ boxes in $\Pc_\delta(S)$, the $l^p$-decoupling follows from the $l^2$-decoupling and H\"older's inequality when $S$ has positive curvature. However, if $S$ has negative curvature, the stronger $l^2$-decoupling may fail. This is easiest to observe in the case of the hyperbolic paraboloid, corresponding to $g(\xi_1,\xi_2)=\xi_1^2-\xi_2^2.$ What rules out the $l^2$-decoupling here is the fact that this surface contains at least one line, and the following elementary principle (applied with $N\sim \delta^{-1/2}$).
\begin{pr}
\label{5}
Let $L$ be a line segment in $\R^n$ of length $\sim 1$. For each $0 \le \delta, N^{-1}<1$, let $\Pc_{\delta,N}$ be a partition of the $\delta$-neighborhood $\Nc_\delta(L)$ of $L$ into $\sim N$ cylinders $T$  with length $N^{-1}$ and radius $\delta$.

For $p>2$ let $D(\delta,N,p)$ be the smallest constant such that
\begin{equation}
\label{3}
\|f\|_{L^p(\R^n)}\le D(\delta,N,p)(\sum_{T\in\Pc_{\delta,N}}\|f_T\|_{L^p(\R^n)}^2)^{1/2}
\end{equation}
holds for all $f$ with Fourier transform supported on $\Nc_\delta(L)$. Then
$$D(\delta,N,p)\sim N^{\frac12-\frac{1}{p}},$$
and (approximate) equality in \eqref{3} can be achieved by using a smooth approximation of $1_{\Nc_\delta(L)}$.
\end{pr}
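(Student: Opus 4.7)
The plan is to prove the matching upper bound $D(\delta,N,p)\lesssim N^{1/2-1/p}$ and lower bound $D(\delta,N,p)\gtrsim N^{1/2-1/p}$, the latter via the explicit near-extremizer indicated in the statement.

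For the upper bound, I would interpolate on the combination operator $\Phi:(f_T)_T\mapsto \sum_T f_T$ acting between the mixed-norm spaces $\ell^2(L^p)$ and $L^p$. After an affine change of coordinates we may assume $L=[0,1]\times\{0\}^{n-1}$, so that $\Nc_\delta(L)$ is essentially $[0,1]\times[-\delta,\delta]^{n-1}$ and the tiles $T\in\Pc_{\delta,N}$ have essentially disjoint Fourier supports of size $N^{-1}\times\delta^{n-1}$. At $p=2$, Plancherel and disjointness give $\|\sum_T f_T\|_2^2\sim \sum_T\|f_T\|_2^2$, hence the operator norm of $\Phi$ is $O(1)$. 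At $p=\infty$, the triangle inequality followed by Cauchy--Schwarz in the $N$ tiles yields $\|\sum_T f_T\|_\infty\le N^{1/2}(\sum_T\|f_T\|_\infty^2)^{1/2}$, hence the operator norm of $\Phi$ is $O(N^{1/2})$. Complex interpolation along the mixed-norm scale (the Fourier support constraints are linear and hence preserved under interpolation) then gives the $\ell^2(L^p)\to L^p$ bound $N^{1/2-1/p}$ for every $2\le p\le\infty$.

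For the lower bound, I would take $f$ to be a smooth approximation of the inverse Fourier transform of $1_{\Nc_\delta(L)}$. With the normalization above, both $f$ and each $f_T$ are products of Dirichlet-type kernels in each coordinate. A direct computation yields
\begin{equation*}
\|f\|_p\sim \delta^{(n-1)(1-1/p)}, \qquad \|f_T\|_p\sim N^{-(1-1/p)}\delta^{(n-1)(1-1/p)},
\end{equation*}
because $f$ has amplitude $\sim\delta^{n-1}$ on the dual box $[0,1]\times[-\delta^{-1},\delta^{-1}]^{n-1}$ of volume $\sim\delta^{-(n-1)}$, while each $f_T$ has amplitude $\sim N^{-1}\delta^{n-1}$ on the elongated dual box $[0,N]\times[-\delta^{-1},\delta^{-1}]^{n-1}$ of volume $\sim N\delta^{-(n-1)}$. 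Summing the $N$ identical contributions gives
\begin{equation*}
\Bigl(\sum_{T\in\Pc_{\delta,N}}\|f_T\|_p^2\Bigr)^{1/2}\sim N^{1/p-1/2}\|f\|_p,
\end{equation*}
and inserting $f$ into \eqref{3} forces $D(\delta,N,p)\gtrsim N^{1/2-1/p}$, as claimed.

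The proposition is really an elementary exercise, and I do not expect any serious obstacle. The only care needed is in keeping track of the ``essentially'' in the Fourier-support disjointness (adjacent tiles may share a boundary; a genuine partition can be arranged) and in controlling the rapid-decay tails of the Dirichlet kernels outside the dual boxes. The conceptual content is simply that along a line segment the $\ell^2$-decoupling constant cannot improve on what Cauchy--Schwarz gives, because the extremizer's mass distributes equally over all $N$ tiles in physical space.
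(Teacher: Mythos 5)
The paper states Proposition \ref{5} without proof, calling it an ``elementary principle,'' so there is no paper argument to compare against; I will simply assess your proposal on its own terms. Your proof is correct, and it is the standard ``flat decoupling'' argument. Both the extremizer computation for the lower bound and the interpolation scheme for the upper bound check out: the exponent $N^{\frac12-\frac1p}$ indeed interpolates $N^0$ at $p=2$ and $N^{1/2}$ at $p=\infty$, and the $L^p$ norms of $f$ and $f_T$ are as you compute (amplitude $\delta^{n-1}$ on $[-1,1]\times[-\delta^{-1},\delta^{-1}]^{n-1}$, amplitude $N^{-1}\delta^{n-1}$ on $[-N,N]\times[-\delta^{-1},\delta^{-1}]^{n-1}$, with rapidly decaying tails after smoothing).

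One point deserves a little more care than your parenthetical remark about ``Fourier support constraints being preserved under interpolation.'' The map $\Phi:(f_T)_T\mapsto\sum_T f_T$ is \emph{not} bounded $\ell^2(L^2)\to L^2$ with norm $O(1)$ on the full mixed-norm space: taking $f_T=g$ for every $T$ yields operator norm $\ge N^{1/2}$. The $p=2$ endpoint genuinely uses the Fourier-support constraint, so a naive appeal to Riesz--Thorin on $\ell^2(L^p)$ is not immediate. The standard fix is to interpolate the modified operator $\Phi':(g_T)_T\mapsto\sum_T P_T g_T$, where $P_T$ is a smooth Fourier projection adapted to $T$ (a multiplier equal to $1$ on $T$, supported on a slight enlargement, with an $L^1$-normalized convolution kernel). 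Then $\Phi'$ is defined on all of $\ell^2(L^p)$, one has $\|\Phi'\|_{\ell^2L^2\to L^2}=O(1)$ by almost-orthogonality and $\|\Phi'\|_{\ell^2L^\infty\to L^\infty}=O(N^{1/2})$ by Young's inequality plus Cauchy--Schwarz, Benedek--Panzone interpolation gives the mixed-norm bound, and one specializes to $g_T=f_T$ with $\widehat{f_T}\subset T$ so that $P_T f_T=f_T$. With that adjustment your argument is complete and rigorous.

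\end{document}
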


The implicit constants in \eqref{1} and \eqref{2} depend on $\epsilon$, on the $C^3$ norm of $g$ and on the lower bound for the Gaussian curvature. In \cite{BD3} and \cite{BD4}, inequalities \eqref{2} and \eqref{1} are first proved for the model surfaces, the elliptic and hyperbolic paraboloid, respectively. The extension to the more general surfaces in Theorem \ref{4}  is then obtained via local approximation and induction on scales, using Taylor's formula with cubic error term. This is the reason why the third derivatives are also important, in addition to the first and second order ones.
\medskip

The notable feature of the choice of the diameter $\delta^{1/2}$ of each  $\tau\in\Pc_\delta(S)$ in Theorem \ref{4} is that this is the largest scale for which $\tau$ can be thought of  as being essentially flat. By that we mean that there is a rectangular box $R_\tau$ such that $R_\tau\subset \tau\subset 1000R_\tau$.
This is of course a consequence of the nonzero curvature condition. The case when one of the principal curvatures is zero leads to new types of decoupling, that have been only partially explored (see also the last section). For future reference, we record the result from \cite{BD3} for the cone
$$\Cc^2:=\{(\xi_1,\xi_2,\sqrt{\xi_1^2+\xi_2^2}\;):\;\frac14\le \xi_1^2+\xi_2^2\le 4\}$$
and the cylinder
$$\Cc yl^2:=\{(\xi_1,\xi_2,\xi_3):\;\xi_1^2+\xi_2^2=1,\;|\xi_3|\lesssim  1\}.$$
\begin{te}
\label{6}	
For $S$ either $\Cc^2$ or $\Cc yl^2$ we let $\Pc_\delta(S)$ be a partition of $\Nc_\delta(S)$ into roughly $\delta^{-1/2}$ essentially  rectangular plates $P$ with dimensions $\sim 1\times \delta^{1/2}\times \delta.$ Then for each $2\le p\le 6$ and each $f$ with Fourier transform supported in  $\Nc_\delta(S)$ we have
$$\|f\|_{L^p(\R^3)}\lesssim_\epsilon\delta^{-\epsilon}(\sum_{P\in\Pc_{\delta}(S)}\|f_P\|_{L^p(\R^3)}^2)^{1/2}.$$
\end{te}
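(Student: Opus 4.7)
The plan treats the two surfaces separately. In both cases, the key is to reduce to the $l^2$-decoupling for the circle (equivalently, the parabola) in $\R^2$ at $p\in[2,6]$, which I take as a known black box.

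\emph{Cylinder.} Here $\Nc_\delta(\Cc yl^2)\simeq \Nc_\delta(\S^1)\times I$ in frequency (with $I$ a bounded $\xi_3$-interval), and each plate $P$ factors as a thickened $\delta^{1/2}$-arc in the $(\xi_1,\xi_2)$-plane crossed with $I$. I would do Fubini in $x_3$: for each fixed $x_3$, the partial inverse Fourier transform $f(\cdot,\cdot,x_3)$ has Fourier support in $\Nc_\delta(\S^1)\subset\R^2$, to which I apply the planar circle $l^2$-decoupling to get a slice-wise bound in $L^p_{x_1,x_2}$ by an $l^2_P$-sum over the plates. Raising to the $p$-th power, integrating in $x_3$, and invoking Minkowski's integral inequality (valid for $p\ge 2$) to swap the outer $L^p_{x_3}$ with the inner $l^2_P$ delivers the 3D inequality.

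\emph{Cone.} The cone has no product structure, but the dilation $\xi\mapsto\lambda\xi$ is a symmetry, and the plan is an induction-on-scales argument in the spirit of the proof of Theorem \ref{4}. First, establish the decoupling for the model parabolic cylinder $\{(\xi_1,\xi_2,\xi_2^2):\xi_1,\xi_2\in[-1,1]\}$ with its natural $1\times\delta^{1/2}\times\delta$ plates; this follows by exactly the Fubini + planar parabola argument of the cylinder case. Next, localize the cone to an angular sector of some intermediate width $\eta$. A Taylor expansion in the angular variable shows that the sector agrees to cubic order with its osculating parabolic cylinder, and a parabolic rescaling sends the $\delta$-neighborhood of the sector to a $(\delta/\eta^2)$-neighborhood of a unit-sized piece. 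Combining the parabolic-cylinder bound with an inductive application of the cone bound at the coarser scale $\delta/\eta^2$ closes the loop, and the $\epsilon$-losses are absorbed in a standard dyadic induction on scales.

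\emph{Main obstacle.} The cylinder case is essentially a Fubini + Minkowski exercise. The real work lies in the cone case, specifically in packaging the Taylor-approximation step so that the cubic error does not exceed the $\delta^{-\epsilon}$ budget, and in correctly iterating the rescaling. This is the same mechanism by which Theorem \ref{4} passes from the paraboloid model to general positively-curved surfaces; it is also what transmits the sharp critical exponent $p=6$ from the planar parabola to the cone.
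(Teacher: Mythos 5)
The paper does not actually prove Theorem~\ref{6}; it is quoted from~\cite{BD3}, so there is no in-paper proof to compare against. On its own merits, your cylinder argument is clean and correct: $\Nc_\delta(\Cc yl^2)$ literally factors as $\Nc_\delta(\S^1)\times I$, each plate factors compatibly, the slicewise two-dimensional circle decoupling applies at each fixed $x_3$, and the passage from $L^p_{x_3}\bigl(l^2_P\bigr)$ to $l^2_P\bigl(L^p_{x_3}\bigr)$ is legitimate for $p\ge 2$ by Minkowski.

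The cone plan, however, has a genuine gap at the Taylor-approximation step, and the analogy with the mechanism of Theorem~\ref{4} is precisely where it breaks. In Theorem~\ref{4} the caps at scale $\eta$ have diameter $\eta$ in \emph{every} tangential direction, so the quadratic model errs by $O(\eta^3)$ over the whole cap and a parabolic rescaling beats the error down to $O(\eta)$, which is absorbed by induction on scales. For the cone, the relevant units are plates of length $\sim 1$ in the radial direction; the $\eta$-sector you propose to rescale therefore still has radial extent $\sim 1$. Writing $\xi_3-\xi_2 = \dfrac{\xi_1^2}{2\xi_2}+O(\xi_1^4)$ on a sector with $|\xi_1|\lesssim\eta$, the coefficient of the quadratic term is $1/(2\xi_2)$ and \emph{varies by an $O(1)$ factor as $\xi_2$ ranges over $[1/2,2]$}. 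Consequently no single parabolic cylinder $\xi_3-\xi_2 = c\,\xi_1^2$ approximates the sector to better than $O(\eta^2)$, which is the same order as the thickness of the sector's flattening slab rather than one order smaller. After the parabolic rescaling $(\xi_1,\xi_3-\xi_2)\mapsto(\xi_1/\eta,(\xi_3-\xi_2)/\eta^2)$ the discrepancy becomes $O(1)$, so the rescaled object is not close to a parabolic cylinder; indeed, by the exact dilation invariance of the cone, the rescaled object \emph{is} again the cone at the coarser scale $\delta/\eta^2$. The recursion this produces, $D_{\Cc^2}(\delta)\le D_{\Cc^2}(\eta^2)\,D_{\Cc^2}(\delta/\eta^2)$, is tautological and does not by itself yield $D_{\Cc^2}(\delta)\lesssim_\epsilon\delta^{-\epsilon}$.

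What actually makes the cone go through is not a Taylor approximation but a genuine reduction to the parabola via a null (Lorentz) slicing. In the coordinates $u=\xi_3-\xi_2$, $v=\xi_3+\xi_2$, $w=\xi_1$ the cone becomes $uv=w^2$, and for each fixed $u$ the slice is the parabola $v=w^2/u$. The angular plates cut out $\delta^{1/2}$-arcs of these parabolas, so one can run the planar $l^2$-decoupling for the parabola in each $u$-slice and then integrate, exactly in the spirit of your cylinder argument; the $u$-dependence of the parabola enters only through an affine rescaling, which is harmless for decoupling constants. I would recommend replacing the Taylor/osculating-cylinder step with this slicing step; it gives the sharp exponent $p=6$ directly from the planar parabola, with no additional induction on scales needed beyond what is already inside the planar black box.
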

The fact that we decouple using plates of length $\sim 1$  is enforced by Proposition \ref{5}. The range $[2,6]$ here is larger than the range $[2,4]$ from Theorem \ref{4} because of subtle dimensionality considerations.

As an immediate corollary of H\"older's inequality, we get the following $l^4$ decoupling for $S=\Cc^2,\Cc yl^2$, analogous to \eqref{1}
\begin{equation}
 \label{8}
 \|f\|_{L^4(\R^3)}\lesssim_\epsilon\delta^{-\epsilon-\frac1{8}}(\sum_{P\in\Pc_{\delta}(S)}\|f_P\|_{L^4(\R^3)}^4)^{1/4}.
 \end{equation}

We will refer to this inequality for the cylinder  as {\em cylindrical decoupling}.

\bigskip

A natural step would be to try  to extend Theorems \ref{4} and \ref{6} to the case of arbitrary real analytic surfaces $S$ in $\R^3$, without any restriction on curvature.
One of the issues is identifying the correct dimensions of the boxes in the partition of $\Pc_\delta(S)$. In analogy to the previous examples, we would like these boxes to be essentially flat. One possible way to formalize the question is recorded in the following conjecture.
\begin{con}
\label{9}
If $S$ is the graph of a nonconstant real analytic function $g:[-1,1]^2\to\R$ then for each $0<\delta\le 1$ there is a partition $\Pc_\delta(S)$ of $\Nc_\delta(S)$ into essentially flat boxes $\tau$ (of possibly different dimensions) such that for each $f$ with Fourier transform supported in $\Nc_\delta(S)$ we have
$$
\|f\|_{L^4(\R^3)}\lesssim_{\epsilon}\delta^{-\epsilon}|\Pc_{\delta}(S)|^{\frac14}(\sum_{\tau\in\Pc_\delta(S)}\|f_\tau\|^4_{L^4(\R^3)})^{1/4},$$
where $|\Pc_{\delta}(S)|$ refers to the cardinality of $\Pc_{\delta}(S)$.
\end{con}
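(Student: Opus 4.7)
The plan is to reduce the conjecture for an arbitrary real analytic $g:[-1,1]^2\to\R$ to the known model cases of Theorems \ref{4} and \ref{6} via a real analytic stratification of the parameter domain, combined with dyadic rescaling in the size of the principal curvatures. Unless $g$ is affine (in which case the conjecture is trivial), the Hessian determinant $H=\det(D^2g)$ is a nonzero real analytic function, so its zero set $Z(H)\subset [-1,1]^2$ is a real analytic curve with at most finitely many singular points. Using Puiseux parametrization, or alternatively Hironaka's real analytic resolution of singularities, one decomposes $[-1,1]^2$ into finitely many curvilinear sectors on which $|H|$ and $\operatorname{tr}(D^2g)$ are each comparable to a monomial $|u|^a|v|^b$ in suitable analytic coordinates. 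The number of such sectors depends only on the analytic complexity of $g$, not on $\delta$.

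On a dyadic subregion $\Omega_\lambda$ where $|H|\sim\lambda$ and both principal curvatures are comparable to $\lambda^{1/2}$, I would cover $\Omega_\lambda$ by squares of side $\lambda^{-1/2}$ in parameter space, apply on each square the anisotropic affine transformation that rescales the Hessian to be comparable to the identity, and then invoke Theorem \ref{4} at the effective scale $\delta/\lambda$. After unscaling, this yields flat boxes of dimensions $(\delta\lambda^{-1/2})^{1/2}\times (\delta\lambda^{-1/2})^{1/2}\times \delta$ inside $\Nc_\delta(S)$. On a sector where one principal curvature is $\sim\mu$ and the other is smaller, the local geometry is cylindrical or conical and Theorem \ref{6} applies after a similar rescaling, producing plates whose long direction is tangent to the nearly-ruled direction of $S$. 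On sectors where both principal curvatures are already $\lesssim\delta$, the corresponding surface piece is essentially flat at scale $\delta$ and may be decomposed trivially into $\delta$-sized boxes. Summing dyadically in $\lambda$ and $\mu$ via the $l^4$ triangle inequality, using that only $O(\log(1/\delta))$ scales are relevant, assembles a single partition $\Pc_\delta(S)$ and produces the stated bound with a $\delta^{-\epsilon}$ factor.

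The main obstacle is what happens near points where the second fundamental form degenerates to arbitrarily high order, in particular at singular points of $Z(H)$ and along arcs where $g$ is very close to being ruled. There neither Theorem \ref{4} nor Theorem \ref{6} applies after a single rescaling, and iterating the rescaling risks accumulating polynomial losses that are incompatible with the $\delta^{-\epsilon}$ target. Controlling this requires either an induction on the Newton polygon of $H$, a Lojasiewicz-type quantitative bound on how fast the principal curvatures can vanish along a real analytic arc, or a strengthened uniform local version of Theorem \ref{6} valid for conical surfaces with degenerating aperture. Since the conjecture permits boxes of differing shapes and only demands the $l^4$ norm, any polylogarithmic loss from the iteration can in principle be absorbed into the cardinality factor $|\Pc_\delta(S)|^{1/4}$; making this absorption work uniformly in the analytic complexity of $g$ is, I expect, where the actual difficulty of the conjecture lies.
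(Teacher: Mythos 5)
The statement you are addressing is Conjecture \ref{9}; the paper does not prove it in this generality, and neither does your proposal. The paper's actual theorem (Theorem \ref{11}) establishes the conjecture only for real analytic surfaces of revolution, where the zero set of the Gaussian curvature is a finite union of circles $r=r_i$ and the surface admits an explicit one-variable normal form $\gamma(r)=c+a(r-r_i)^n+O((r-r_i)^{n+1})$ near each such circle. Restricted to that setting, your outline does track the paper's strategy: a dyadic decomposition in the distance to the degenerate circle, a first (cylindrical or conical) decoupling into plates via Theorem \ref{6}, and then an affine rescaling of each plate to a unit-scale surface with curvature $\sim 1$ to which Theorem \ref{4} applies.

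For general $g$, however, your sketch has gaps that you partly acknowledge but do not close. First, on a region where $|H|\sim\lambda$ the two principal curvatures need not both be $\sim\lambda^{1/2}$, and when they are very different the single anisotropic rescaling you propose turns the isotropic $\delta$-neighborhood of $S$ into an anisotropic one, so Theorem \ref{4} is not directly applicable; this is precisely why the paper decouples in two stages rather than one. Second, where one principal curvature degenerates the surface is only ``nearly ruled,'' and after rescaling the model need not be a cone or a cylinder: the asymptotic ruling direction can twist (tangent developables being the standard example), and no analogue of Theorem \ref{6} is available for such surfaces. Even in the rotationally symmetric case the paper must prove a dedicated lemma (uniform $C^3$ bounds and a curvature lower bound for the rescaled $\psi_k$) using the explicit power-series normal form; nothing analogous is supplied, or readily available, for a general analytic stratification of $Z(H)$ with singular points. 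Third, the claim that polynomial losses from iterated rescaling ``can be absorbed into $|\Pc_\delta(S)|^{1/4}$'' is unjustified: refining the partition also changes the $l^4$ sum on the right-hand side and does not create slack of the required size; only $\delta^{-\epsilon}$ losses are affordable. In sum, your proposal is a plausible program whose tractable portion coincides in spirit with the paper's treatment of the rotationally symmetric case, but it does not constitute a proof of the conjecture, which remains open.
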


In this generality, identifying such a partition seems to be a rather difficult task. We will limit our investigation to the class of surfaces of revolution, which as we shall soon see, is large enough to include some interesting new examples.
\smallskip

To get started, for each real analytic function $\gamma:[\frac12,2]\to\R$ we consider the associated surface of revolution
$$S_\gamma=\{(\xi_1,\xi_2,\gamma(\sqrt{\xi_1^2+\xi_2^2}\;)):\;\frac14\le \xi_1^2+\xi_2^2\le 4\}.$$
For example, the cone $\Cc^2$ corresponds to $\gamma(r)=r$.
Our main result can be somewhat vaguely summarized as follows. We save the details about the precise definition of $\Pc_\delta(S)$ for the later sections. The interesting new feature of the partitions $\Pc_\delta(S)$ is that they will consist of boxes of different scales. 
\begin{te}[Main result]
\label{11}	
Conjecture \ref{9} holds for all real analytic surfaces of revolution $S_\gamma$.
\end{te}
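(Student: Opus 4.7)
My plan is to exploit the rotational symmetry of $S_\gamma$ in order to reduce the problem to an essentially one-dimensional decoupling governed by the analytic profile $\gamma$, using cylindrical decoupling to handle the angular direction and dyadic rescaling to handle the degenerate radii.

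\textbf{Radial dissection via real analyticity.} Because $\gamma$ is real analytic on $[1/2,2]$, either $\gamma$ is affine --- in which case $S_\gamma$ is a piece of a cone, a flat disk, or a plane, and Theorem~\ref{6} (together with H\"older) applies directly --- or the sets $\{\gamma'=0\}$ and $\{\gamma''=0\}$ are each finite, with every zero of finite order. Let $Z\subset[1/2,2]$ be their union. I partition $[1/2,2]$ into finitely many ``regular'' sub-intervals bounded away from $Z$, together with dyadic annuli $A_j=\{r:|r-r_0|\sim 2^{-j}\}$ around each $r_0\in Z$, for $j$ ranging up to the scale at which $A_j$ itself becomes essentially flat.

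\textbf{Regular pieces and degenerate rescaling.} On a regular sub-interval, the annular piece of $S_\gamma$ has both principal curvatures bounded below, and Theorem~\ref{4} at $p=4$ gives the $l^4$-decoupling into standard $\delta^{1/2}\times\delta^{1/2}\times\delta$ boxes with the sharp constant $|\Pc_\delta|^{1/4}\delta^{-\epsilon}$ demanded by Conjecture~\ref{9}. On a dyadic annulus $A_j$ around a zero of $\gamma''$ of order $k$, one has $|\gamma''|\sim 2^{-jk}$, so the flat radial scale is $\delta^{1/2}2^{jk/2}$; an anisotropic affine rescaling sends the surface piece over $A_j$ to a comparable copy of a surface of nonzero Gaussian curvature, to which Theorem~\ref{4} applies, yielding after unscaling an $l^4$-decoupling of $A_j$ into boxes of dimensions $\delta^{1/2}2^{jk/2}\times\delta^{1/2}\times\delta$. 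An analogous rescaling handles a zero of $\gamma'$, producing boxes elongated in the angular direction. Contributions from distinct dyadic annuli are then glued using the cylindrical $l^4$-decoupling \eqref{8}, which supplies exactly the $|\Pc_\delta|^{1/4}$ aggregation and absorbs the $O(\log\delta^{-1})$ dyadic scales into $\delta^{-\epsilon}$.

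\textbf{Main obstacle.} The hardest case is a radius $r_0$ where $\gamma'$ and $\gamma''$ vanish simultaneously, or where one vanishes to high order; here the radial and angular flat scales degenerate at different rates, so a single anisotropic dilation no longer reduces to Theorem~\ref{4}, and a two-parameter dyadic scheme coordinating both scales is required. Even at the torus's equator, where only $\gamma'$ vanishes, the tangent plane to $S_\gamma$ is horizontal along a whole circle, so cylindrical decoupling in its rawest form no longer cleanly separates angular from radial information. The delicate bookkeeping lies in verifying that the final partition $\Pc_\delta$ has cardinality such that $|\Pc_\delta|^{1/4}$ matches the loss accumulated across all dyadic pieces, and that each rescaling is a genuine affine change of variables so that no unwanted curvature-dependent constants slip in through the back door.
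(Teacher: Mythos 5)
Your broad skeleton --- finitely many bad radii by analyticity, dyadic annuli $|r-r_0|\sim s$ around each, rescaling each annulus to a unit-curvature surface, Theorem~\ref{4} on the complement --- is the paper's skeleton too, but several of the load-bearing steps in your sketch are either structurally wrong or hide the real difficulty. First, you propose to glue the dyadic annuli together using cylindrical decoupling~\eqref{8}; in fact the annuli live at incommensurable cylindrical scales and cannot be aggregated that way. The paper simply uses the triangle inequality across the $O(\log\delta^{-1})$ values of the dyadic parameter (absorbing the loss into $\delta^{-\epsilon}$), and uses cylindrical or cone decoupling \emph{within} a single annulus as the first of two decoupling steps. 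Second, and more seriously, your phrase ``an analogous rescaling handles a zero of $\gamma'$'' papers over the fact that the two degeneracies require genuinely different preliminary decouplings. When $\gamma'(r_0)=0$ (quasi-torus), one first applies \emph{cylindrical} decoupling to obtain caps of angular width $\sim s_k^{1/2}$, and a single anisotropic dilation then renders them uniformly curved. When instead $\gamma'(r_0)\neq 0$ and $\gamma''$ vanishes to order $n-2$ (perturbed cone), cylindrical decoupling produces caps that are too wide to rescale to unit curvature; one must instead apply \emph{cone} decoupling to get the much narrower angular width $\sim s_k^{n/2}$, and then rotate by $\pi/4$ to light-cone coordinates before the anisotropic dilation. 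Your sketch never isolates this asymmetry, and it is precisely the crux of the perturbed-cone case.

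Finally, the ``main obstacle'' you single out --- simultaneous vanishing of $\gamma'$ and $\gamma''$, requiring a two-parameter dyadic scheme --- is not actually an obstacle in the paper's framework. If $\gamma'(r_0)=0$ one is in the quasi-torus case \emph{regardless} of whether $\gamma''$ also vanishes: writing $\gamma(r)=\gamma(r_0)+c(r-r_0)^n+\cdots$ with $n\geq 2$ the order of the first nonvanishing derivative, the radial and angular curvatures on the annulus $|r-r_0|\sim s_k$ are $\sim s_k^{n-2}$ and $\sim s_k^{n-1}$ respectively. These indeed degenerate at different rates, as you observe, but a \emph{single} linear map per annulus with different dilation factors in the three coordinates (namely $s_k^{-1/2}, s_k^{-1}, s_k^{-n}$) already renormalizes both to $\sim 1$; no second dyadic parameter is needed. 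You have the right instinct that anisotropy is essential, but you should check that the anisotropic rescaling you write down actually lands on a surface satisfying the curvature and $C^3$ hypotheses of Theorem~\ref{4} uniformly in $k$ --- the paper devotes a lemma to this verification, and without it your reduction is not complete.
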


As we shall soon see, the curvature of $S_\gamma$ is zero exactly when either $\gamma'$ or $\gamma''$ is zero.
Let $r_1,\ldots,r_M$ be the zeros of $\gamma'\gamma''$ inside $[\frac12,2]$.
The fact that there are only finitely many such zeros is a consequence of the real analyticity of $\gamma$.
We consider pairwise disjoint intervals $I_i=(r_i-\Delta_i,r_i+\Delta_i)$, with $\Delta_i$ small enough  such that the power series expansion of $\gamma$ centered at $r_i$ has radius of convergence $>\Delta_i$. Various other restrictions on the smallness of $\Delta_i$ will become apparent throughout the forthcoming argument. Note that the complement $$[\frac12,2]\setminus\bigcup_{i=1}^MI_i=\bigcup J_i$$ is the union of at most $M+1$ intervals $J_i$.
The triangle inequality will allow us to separately consider the part of the surface corresponding to one such interval. On the  intervals $J_i$ the surface will have nonzero curvature, so Theorem \ref{4} is applicable.

It remains to investigate the contribution from the intervals $I_i$. Let us fix such an  interval.
To simplify notation, we will assume it to be $(1-\Delta,1+\Delta)$.

The  partition $\Pc_\delta(S)$ and the type of analysis we will employ will depend on the derivatives of $\gamma$ at 1. These derivatives encode all the necessary information concerning the size of the two principal curvatures of $S_\gamma$. This will be explored in more detail the next section.

\section{A case analysis based on principal curvatures}

Differential geometry ties the notion of curvature of surfaces $S$ in $\R^3$ to the change in the direction of the normal vector along curves in $S$. To be exact, it describes curvature by way of the derivative of the map $N: S \to \S^2$, whose value at $p$ is the unit (outward) normal vector of $S$ at $p$.

When $S$ is given as the graph of a function $g$, this differential in local coordinates $(\xi_1, \xi_2)$ has the form
\begin{equation} (1+(g_1)^2+(g_2)^2)^{-\frac{3}{2}}\begin{pmatrix} g_{11}(1+(g_2)^2)-g_1g_2g_{12} & & &  g_{12}(1+(g_2)^2)-g_1g_2g_{22} \\ \\ \\  g_{12}(1+(g_1)^2) - g_1g_2g_{11} & & &  g_{22}(1+(g_1)^2)-g_1g_2g_{12} \end{pmatrix} \label{M}\end{equation} where $g_i = \frac{\partial g}{\partial \xi_i}$ and $g_{ij} = \frac{\partial^2g}{\partial \xi_i \partial \xi_j}$.

With a little algebra, the determinant  (also known as the Gaussian curvature of $S$) at a point $(\xi_1, \xi_2, g(\xi_1, \xi_2))$ is found to be
\begin{equation}
\label{22}
K_S(\xi_1, \xi_2) = \frac{g_{11}g_{22} - (g_{12})^2}{(1+(g_1)^2+(g_2)^2)^{2}}.
\end{equation}
The two eigenvalues $\lambda_1,\lambda_2$ are called principal curvatures. Their product equals the Gaussian curvature.

For a later convenience, we record the simplified version of \eqref{22} in the case that $S$ is the surface of revolution  $S_\gamma$. The Gaussian curvature along $\sqrt{(\xi_1)^2+(\xi_2)^2} = r$ is
\begin{equation}\label{10}   K(r) = \frac{\gamma'(r)\gamma''(r)}{r(1+\gamma'(r)^2)^2}.
\end{equation}
To motivate our intuition in the following sections, we also record the following well known formulae for the principal curvatures in the radial and angular directions
$$|\lambda_{rad}(r)|=\frac{|\gamma''(r)|}{(1+(\gamma'(r))^2)^{3/2}}$$
$$|\lambda_{ang}(r)|=\frac{|\gamma'(r)|}{r(1+(\gamma'(r))^2)^{1/2}}.$$

We will split our analysis into three cases.

\bigskip

\textbf{Case 1.}  If $\gamma'(1)\not=0$ and $\gamma^{(n)}(1)=0$ for all $n\ge 2$, then we have in fact $\gamma(r)=\gamma'(1)r$. This is a cone, so it is covered by Theorem \ref{6}.
The next two cases are new.
\bigskip

\textbf{Case 2.}  If $\gamma'(1)=\ldots=\gamma^{(n-1)}(1)=0$ and $\gamma^{(n)}(1)\not=0$ for some $n\ge 2$, then the angular principal curvature is zero along the curve $r=1$.  We will refer to these manifolds as {\em quasi-tori} and will discuss them in Section \ref{s3}.

The typical example to have in mind is the torus, corresponding to
\begin{equation}
\label{24}
\gamma(r)=(\frac14-(r-1)^2)^{1/2}
\end{equation}
defined on $(1-\Delta,1+\Delta)$, $\Delta<\frac12$.
\bigskip

\textbf{Case 3.}  If $\gamma'(1)\not=0$, $\gamma''(1)=\ldots=\gamma^{(n-1)}(1)=0$ and $\gamma^{(n)}(1)\not=0$ for some $n\ge 3$, then the radial principal curvature is zero along the curve $r=1$. These manifolds can be thought of as perturbations of the cone and will be discussed in Section \ref{s4}.

\section{The case of the quasi-torus}
\label{s3}

To simplify notation we will assume $\gamma(1)=1$ and $\gamma^{(n)}(1)=n!$, so that
\begin{equation}
\label{12}
\gamma(r)=1+(r-1)^{n}+O((r-1)^{n+1}).
\end{equation}
Fix $\delta$. Our task is to describe the partition $\Pc_\delta(S)$.  Recall that we want each element of $\Pc_\delta(S)$ to be an essentially rectangular box.
\medskip

We start with a dyadic decomposition near 1
$$[1-\Delta,1+\Delta]=[1-\delta^{1/n},1+\delta^{1/n}]\cup \cup_{k\ge 1} \{r:\;|r-1|\in(2^{k-1}\delta^{1/n},2^{k}\delta^{1/n}]\,\}.$$
Note that $k$ is restricted to  $O(\log \frac1\delta)$ values. Thus, since we can afford $\epsilon$ losses in Theorem \ref{11} we may invoke again the triangle inequality and restrict our attention to a fixed $k$. Due to symmetry, we may further restrict attention to the right halves of these sets in the above decomposition, which we call $U_k$.

For $k\ge 0$ let us call $S_k$ the part of the surface $S_\gamma$ above the thin annulus
$$\A_k=\{(\xi_1,\xi_2):\;(\xi_1^2+\xi_2^2)^{1/2}\in U_k\}.$$
Figure 1 depicts $S_0,S_1,S_2$, with $S_0$ being the nearly horizontal circular strip at the top. The rationale for bringing in such a decomposition is that the two principal curvatures are essentially constant on each $S_k$
$$|\lambda_{rad}(r)|\sim (2^k\delta^{1/n})^{n-2}$$
$$|\lambda_{ang}(r)|\sim (2^k\delta^{1/n})^{n-1}.$$

\bigskip

We will  first see how to deal with the surface $S_0$ corresponding to the interval $U_0=[1,1+\delta^{1/n}]$.   Note that  $\Nc_\delta(S_0)$ sits inside the $C\delta^{1/n}$-neighborhood  of the cylinder $\Cc yl^2$, with $C=O(1)$.
We may thus apply cylindrical decoupling  (Theorem \ref{6}) with $\delta$ replaced with $\delta^{1/n}$. Each vertical plate of dimensions $\sim 1\times \delta^{\frac1{2n}}\times \delta^{\frac1n}$ will intersect $S_0$ in a cap $\theta_0$ with dimensions $\sim  \delta^{1/2n}\times \delta^{1/n}$. Note that for each such $\theta_0$, the box $\Nc_\delta(\theta_0)$ is essentially flat.
\medskip

Let $\Pc_\delta(S_0)$ be the partition consisting of all boxes $\tau_0=\Nc_\delta(\theta_0)$. Invoking cylindrical decoupling, we find that whenever $f$ has Fourier transform supported inside $\Nc_\delta(S_0)$ we have

$$
\|f\|_{L^4(\R^3)}\lesssim_{\epsilon}\delta^{-\epsilon}|\Pc_{\delta}(S_0)|^{\frac14}(\sum_{\tau_0\in\Pc_\delta(S_0)}\|f_{\tau_0}\|^4_{L^4(\R^3)})^{1/4}.$$

The collection $\Pc_\delta(S_0)$ will provide the first elements of the final partition $\Pc_\delta(S)$.

\bigskip

%\begin{figure}
\textbf{Figure 1.} The partition $\Pc_\delta(S)$

	\begin{center}
		\tdplotsetmaincoords{75}{25}
		\begin{tikzpicture}[tdplot_main_coords, scale=4]
		
		\begin{scope}
		
		% xy plane
		
		\fill[fill=yellow!20] (-0.35,-1.5, 0) --  (-0.35, 1.5, 0) --  (2.2, 1.5, 0) --  (2.2,-1.5, 0) -- cycle;

		% co-ordinate axes
		
		\draw[black, thick, ->]
		(-1/4,0,0) -- (2,0,0)
		;
		\draw[black, thick, <-]
		(0,-1.5,0) -- (0,1/2,0)
		;
		\draw[black, thick, ->]
		(1/10,0,-1/6) -- (1/10,0,1/2)
		;
		
		%fill in torus
		
		\fill[fill=white]
		({0.2}, {sqrt(1 - 0.2*0.2}, {1/2})
		\foreach \t in {0.2, 0.21, ..., 1}
		{
			--({\t}, {sqrt(1- \t*\t)}, {1/2})
		}
		\foreach \t in {1, 1.01, ..., 1.5}
		{
			--({\t}, {0}, {sqrt(1/4 - (\t-1)*(\t-1))})
		}
		\foreach \t in {1.5, 1.49, ..., 1}
		{
			--({\t/5}, {\t*sqrt(24)/5}, {sqrt(1/4 - (\t-1)*(\t-1))})
		}
		;

		\fill[fill=white]
		({0.5}, {sqrt(3)/2}, {1/2})
		\foreach \t in {1, 1.01, ..., 1.5}
		{
			--({\t/2}, {\t*sqrt(3)/2}, {sqrt(1/4 - (\t-1)*(\t-1))})
		}
		\foreach \t in {0.75, 0.76, ..., 1.5}
		{
			--({\t}, {sqrt(1.5*1.5 - \t*\t)}, {0})
		}
		;
		
		\fill[fill=white]
		({0.75}, {sqrt(7)/4}, {1/2})
		\foreach \t in {1, 1.01, ..., 1.5}
		{
			--({\t*3/4}, {\t*sqrt(7)/4}, {sqrt(1/4 - (\t-1)*(\t-1))})
		}
		\foreach \t in {1.125, 1.135, ..., 1.5}
		{
			--({\t}, {sqrt(1.5*1.5 - \t*\t)}, {0})
		}
		;
		
		\fill[fill=white]
		({0.9}, {sqrt(19)/10}, {1/2})
		\foreach \t in {1, 1.01, ..., 1.5}
		{
			--({\t*9/10}, {\t*sqrt(19)/10}, {sqrt(1/4 - (\t-1)*(\t-1))})
		}
		\foreach \t in {1.35, 1.36, ..., 1.5}
		{
			--({\t}, {sqrt(1.5*1.5 - \t*\t)}, {0})
		}
		;
		
		\fill[fill=white]
		({1}, {0}, {sqrt(1/4)})
		\foreach \t in {1, 0.99, ..., 0.25}
		{
			--({\t}, {-sqrt(1-\t*\t)}, {sqrt(1/4})
		}
		\foreach \t in {1, 1.01, ..., 1.5}
		{
			--({\t/4}, {-sqrt(15)*\t/4}, {sqrt(1/4 - (\t-1)*(\t-1))})
		}
		\foreach \t in {0.375, 0.385, ..., 1.5}
		{
			--({\t}, {-sqrt(1.5*1.5 - \t*\t)}, {0})
		}
		\foreach \t in {1.5, 1.49, ..., 1}
		{
			--({\t}, {0}, {sqrt(1/4 - (\t-1)*(\t-1))})
		}
		;
		
		%fill in cap
		
		\fill[fill=red! 15]
		({sqrt(1/(1+0.15*0.15))}, {0.15*sqrt(1/(1+0.15*0.15))}, {1/2})
		\foreach \t in {0.15, 0.14, ..., -0.15}
		{
			--({sqrt(1 - \t*\t)}, {\t}, {sqrt(1/4))})
		}
		\foreach \t in {1, 1.01, ..., 1.2}
		{
			--({\t*sqrt(1/(1+0.15*0.15))}, {-0.15*\t*sqrt(1/(1+0.15*0.15))}, {sqrt(1/4 - (\t - 1)*(\t-1))})
		}
		\foreach \t in {-0.2, -0.19, ..., 0.2}
		{
			--({sqrt(1.2*1.2-\t*\t)}, {\t}, {sqrt(1/4 - 0.2*0.2})
		}
		\foreach \t in {1.2, 1.19, ..., 1}
		{
			--({\t*sqrt(1/(1+0.15*0.15))}, {0.15*\t*sqrt(1/(1+0.15*0.15))}, {sqrt(1/4 - (\t - 1)*(\t-1))})
		}
		
		;
		
		% draw top ring

		\draw[black, thick]
		({1}, {0}, {1/2})
		\foreach \t in {1, 0.99, ..., 0.25}
		{
			--({\t}, {-sqrt(1- \t*\t)}, {1/2})
		}
		;

		\draw[black, thick]
		({sqrt(1 - 0.45*0.45)}, {0.45}, {1/2})
		\foreach \t in {0.45, 0.44, ..., -0.1}
		{
			--({sqrt(1 - \t*\t)}, {\t}, {1/2})
		}
		;

		%rings
		
		\draw[black, thick]
		({1.2}, {0}, {sqrt(1/4 - 0.2*0.2)})
		\foreach \t in {1.2, 1.19, ..., 1.08}
		{
			--({\t}, {sqrt(1.2*1.2 - \t*\t)}, {sqrt(1/4 - 0.2*0.2)})
		}
		;
		
		\draw[black, thick]
		({1.2}, {0}, {sqrt(1/4 - 0.2*0.2)})
		\foreach \t in {1.2, 1.19, ..., 0.3}
		{
			--({\t}, {-sqrt(1.2*1.2 - \t*\t)}, {sqrt(1/4 - 0.2*0.2)})
		}
		;
		
		\draw[black, thick]
		({1.38}, {0}, {sqrt(1/4 - 0.38*0.38)})
		\foreach \t in {1.38, 1.37, ..., 1.23}
		{
			--({\t}, {sqrt(1.38*1.38 - \t*\t)}, {sqrt(1/4 - 0.38*0.38)})
		}
		;

		\draw[black, thick]
		({1.38}, {0}, {sqrt(1/4 - 0.38*0.38)})
		\foreach \t in {1.38, 1.37, ..., 0.34}
		{
			--({\t}, {-sqrt(1.38*1.38 - \t*\t)}, {sqrt(1/4 - 0.38*0.38)})
		}
		;
		
%here begins my addition%		
		\draw[gray, thin]
		({1.47}, {0}, {sqrt(1/4 - 0.47*0.47)})
		\foreach \t in {1.47, 1.46, ..., 1.20}
		{
			--({\t}, {sqrt(1.47*1.47 - \t*\t)}, {sqrt(1/4 - 0.47*0.47)})
		}
		;
		
		\draw[gray, thin]
		({1.5}, {0}, {sqrt(1/4 - 0.47*0.47)})
		\foreach \t in {1.47, 1.46, ..., 0.36}
		{
			--({\t}, {-sqrt(1.47*1.47 - \t*\t)}, {sqrt(1/4 - 0.47*0.47)})
		}
		;

%here ends my addition%		
		
		\draw[black, thick]
		({1.5}, {0}, {sqrt(1/4 - 0.5*0.5)})
		\foreach \t in {1.5, 1.49, ..., 1.34}
		{
			--({\t}, {sqrt(1.5*1.5 - \t*\t)}, {sqrt(1/4 - 0.5*0.5)})
		}
		;

		\draw[black, thick]
		({1.5}, {0}, {sqrt(1/4 - 0.5*0.5)})
		\foreach \t in {1.5, 1.49, ..., 0.375}
		{
			--({\t}, {-sqrt(1.5*1.5 - \t*\t)}, {sqrt(1/4 - 0.5*0.5)})
		}
		;

		%caps
		
		\draw[gray, dashed]
		({sqrt(1/(1+0.5*0.5))}, {0.5*sqrt(1/(1+0.5*0.5))}, {1/2})
		\foreach \t in {1, 1.01, ..., 1.5}
		{
			--({\t*sqrt(1/(1+0.5*0.5))}, {0.5*\t*sqrt(1/(1+0.5*0.5))}, {sqrt(1/4 - (\t - 1)*(\t-1))})
		}
		;
		
		\draw[black, thick]
		({sqrt(1/(1+0.15*0.15))}, {0.15*sqrt(1/(1+0.15*0.15))}, {1/2})
		\foreach \t in {1, 1.01, ..., 1.5}
		{
			--({\t*sqrt(1/(1+0.15*0.15))}, {0.15*\t*sqrt(1/(1+0.15*0.15))}, {sqrt(1/4 - (\t - 1)*(\t-1))})
		}
		;

		\draw[black, thick]
		({sqrt(1/(1+0.15*0.15))}, {-0.15*sqrt(1/(1+0.15*0.15))}, {1/2})
		\foreach \t in {1, 1.01, ..., 1.5}
		{
			--({\t*sqrt(1/(1+0.15*0.15))}, {-0.15*\t*sqrt(1/(1+0.15*0.15))}, {sqrt(1/4 - (\t - 1)*(\t-1))})
		}
		;

		\draw[black, thick]
		({sqrt(1/(1+0.45*0.45))}, {-0.45*sqrt(1/(1+0.45*0.45))}, {1/2})
		\foreach \t in {1, 1.01, ..., 1.5}
		{
			--({\t*sqrt(1/(1+0.45*0.45))}, {-0.45*\t*sqrt(1/(1+0.45*0.45))}, {sqrt(1/4 - (\t - 1)*(\t-1))})
		}
		;
		
%Next block added by me%
		
		\draw[gray, thin]
		({sqrt(1/(1+0.6*0.6))}, {-0.6*sqrt(1/(1+0.6*0.6))}, {1/2})
		\foreach \t in {1, 1.01, ..., 1.5}
		{
			--({\t*sqrt(1/(1+0.6*0.6))}, {-0.6*\t*sqrt(1/(1+0.6*0.6))}, {sqrt(1/4 - (\t - 1)*(\t-1))})
		}
		;

		\draw[black, thick]
		({sqrt(1/(1+0.8*0.8))}, {-0.8*sqrt(1/(1+0.8*0.8))}, {1/2})
		\foreach \t in {1, 1.01, ..., 1.5}
		{
			--({\t*sqrt(1/(1+0.8*0.8))}, {-0.8*\t*sqrt(1/(1+0.8*0.8))}, {sqrt(1/4 - (\t - 1)*(\t-1))})
		}
		;
		
		\draw[black, thick]
		({sqrt(1/(1+1.3*1.3))}, {-1.3*sqrt(1/(1+1.3*1.3))}, {1/2})
		\foreach \t in {1, 1.01, ..., 1.5}
		{
			--({\t*sqrt(1/(1+1.3*1.3))}, {-1.3*\t*sqrt(1/(1+1.3*1.3)}, {sqrt(1/4 - (\t - 1)*(\t-1))})
		}
		;
		
		\draw[black, thick]
		({sqrt(1/(1+2.2*2.2))}, {-2.2*sqrt(1/(1+2.2*2.2))}, {1/2})
		\foreach \t in {1, 1.01, ..., 1.5}
		{
			--({\t*sqrt(1/(1+2.2*2.2))}, {-2.2*\t*sqrt(1/(1+2.2*2.2)}, {sqrt(1/4 - (\t - 1)*(\t-1))})
		}
		;
		
		\draw[gray, dashed]
		({0.25}, {-sqrt(15)/4}, {1/2})
		\foreach \t in {1, 1.01, ..., 1.5}
		{
			--({\t/4}, {-sqrt(15)*\t/4}, {sqrt(1/4 - (\t-1)*(\t-1))})
		}
		;

		\node[below] at  ({0}, {-1}, {0}) {\large $\xi_1$};		
		\node[below] at  ({1.75}, {0}, {0}) {\large $\xi_2$};	
		\node[left] at  ({1/10}, {0}, {7/16}) {\large $\xi_3$};
		\node at ({1.1}, {0}, {sqrt(1/4 - 0.1*0.1)}) { $\theta_0$};
		\node at ({1.07}, {0}, {sqrt(1/4 - 0.4*0.4)}) { $\theta_1$};
		\node at ({0.94}, {0}, {sqrt(1/4 - 0.495*0.495)}) {\tiny{ $\theta_{2,2}$}};
		\node at ({0.78}, {0}, {sqrt(1/4 - 0.498*0.498)}) { \tiny{ $\theta_{2,1}$}};
		\node at ({0.98}, {0}, {-0.15}) {\tiny{ $\theta_{2,4}$}};
		\node at ({0.82}, {0}, {-0.17}) {\tiny{ $\theta_{2,3}$}};
		\node at ({0.91}, {0}, {-0.05}) {$\theta_{2}$};
		\end{scope}
		\end{tikzpicture}
	\end{center}
%\end{figure}
\bigskip

Let us now investigate $S_k$, for  $k\ge 1$. Fix  $f$  with Fourier transform  supported inside $\Nc_\delta(S_k)$. There will be two steps needed in order to produce the desired partition $\Pc_\delta(S_k)$. The first step is very similar to the one we did for $k=0$. Namely, we invoke cylindrical decoupling to write

\begin{equation}
\label{14}
\|f\|_{L^4(\R^3)}\lesssim_{\epsilon}\delta^{-\epsilon}|\tilde{\Pc}_{\delta}(S_k)|^{\frac14}(\sum_{\tau_k\in\tilde{\Pc}_\delta(S_k)}\|f_{\tau_k}\|^4_{L^4(\R^3)})^{1/4}.
\end{equation}
Each $\tau_k\in\tilde{\Pc}_\delta(S_k)$ is equal to  $\Nc_\delta(\theta_k)$ for some cap $\theta_k$ of dimensions $\sim (2^{k}\delta^{1/n})^{1/2}\times(2^{k}\delta^{1/n})$.
\bigskip

It is not hard to see that $\tau_k$ is curved. This is because $S_k$ has big radial curvature. More concretely, note that \eqref{12} forces the $\delta$-neighborhood of the graph of $\gamma$ on $U_k$ to be a curved tube. This observation suggests that each $f_{\tau_k}$ can be further decoupled into smaller pieces. The principal curvatures of $\theta_k$ while nonzero, are very small. Consequently, Theorem \ref{4} is not directly applicable. What compensates for the small curvatures is the fact that $\theta_k$ has tiny area. This will allow us to stretch it into a surface of scale $\sim 1$, whose principal curvatures are also $\sim 1$. To execute this strategy  we use a linear transformation in the style of parabolic rescaling.
\bigskip

To simplify notation, let us denote by $s_k$ the scale $2^{k}\delta^{1/n}$. It is also convenient to deal with $\theta_k$ sitting directly above the $\xi_2$ axis, so that a point $(\xi_1,\xi_2,\xi_3)\in\theta_{k}$ satisfies
$$|\xi_1|\lesssim s_k^{1/2},\;\;\;\;\xi_2-1\sim s_k,\;\;\;\;\xi_3-1\sim s_k^n.$$
 We will use the transformation
$$L_k(\xi_1,\xi_2,\xi_3)=(\frac{\xi_1}{s_k^{1/2}},\frac{\xi_2-1}{s_k},\frac{\xi_3-1}{s_k^n}).$$
Let us call $\theta_{k,new}=L_k(\theta_k)$. We make a few observations related to this new surface.
\medskip

First, note that $L_k(\Nc_\delta(\theta_k))\subset \Nc_{\frac{\delta}{ s_k^{n}}}(\theta_{k,new})$. Thus the function $f_{new}$ defined by $$\widehat{f_{new}}=\widehat{f_{\tau_k}}\circ L_k^{-1}$$
has Fourier transform supported in $\Nc_{\frac{\delta}{ s_k^{n}}}(\theta_{k,new})$.

Second, note that the equation of $\theta_{k,new}$ in the new coordinates $\eta_1,\eta_2,\eta_3$ is
$$\eta_3=\frac{\gamma(\sqrt{1+s_k(\eta_1^2+2\eta_2)+s_k^2\eta_2^2}\,)-1}{s_k^n},\;\;|\eta_1|\lesssim 1,\;\;\eta_2\sim 1.$$
Using \eqref{12} and the fact that $\sqrt{1+r}=1+\frac{r}{2}+O(r^2)$ we may write
\begin{align*}
\eta_3&=\frac{1}{2^ns_k^n}(s_k(\eta_1^2+2\eta_2)+s_k^2\eta_2^2)^{n}+O(s_k^{-n}(s_k(\eta_1^2+2\eta_2)+s_k^2\eta_2^2)^{n+1})\\&= \frac{1}{2^n}(\eta_1^2+2\eta_2)^n+O(s_k)\Psi(\eta_1,\eta_2).
\end{align*}
Here $\Psi$ is a $C^\infty$ function.
Let $S_{ref}$ be the surface
$$\{(\eta_1,\eta_2,\frac{1}{2^n}(\eta_1^2+2\eta_2)^n),\;\;|\eta_1|\lesssim 1,\;\;\eta_2\sim 1\}.$$
The fact that $n\ge 2$ and the discussion from the previous section implies that $S_{ref}$ has both principal curvatures $\sim 1$.  The same remains true for $\theta_{k,new}$, as $s_k\ll 1$.

We can thus apply Theorem \ref{4} to decouple $f_{new}$ using $N$ essentially flat  boxes  $B$ of dimensions $\sim (\frac{\delta}{s_k^n})^{1/2}\times(\frac{\delta}{s_k^n})^{1/2} \times(\frac{\delta}{s_k^n})$
\begin{equation}
\label{16}
\|f_{new}\|_{L^4(\R^3)}\lesssim_\epsilon N^{1/4}\delta^{-\epsilon}(\sum_B\|f_{new,B}\|^4_{L^4(\R^3)})^{1/4}.
\end{equation}

Let us call $\tau_{k,l}$ the boxes $L^{-1}_k(B).$ These boxes partition $\Nc_\delta(\theta_k)$ and are essentially flat. Each $\tau_{k,l}$ is essentially the $\delta$-neighborhood of some cap $\theta_{k,l}\subset \theta_k$. Figure 1 depicts the decomposition of some $\theta_2$ into four smaller caps $\theta_{2,l}$.
\medskip

Note that for each $B$
$$\widehat{f_{new,B}}=\widehat{f_{\tau_k,L^{-1}(B)}}\circ L_k^{-1}.$$
Thus, using a change of variables, \eqref{16} can be rewritten as
\begin{equation}
\label{13}
\|f_{\tau_k}\|_{L^4(\R^3)}\lesssim_\epsilon N^{1/4}\delta^{-\epsilon}(\sum_{\tau_{k,l}}\|f_{\tau_{k,l}}\|^4_{L^4(\R^3)})^{1/4}.
\end{equation}
The number $N$ is the same for each $\tau_k$. We can now define the partition $\Pc_\delta(S_k)$ to consist of all $\tau_{k,l}$ with $\tau_{k}\in\tilde{\Pc_\delta}(S_k)$. Combining \eqref{14} with \eqref{13} we get the following decoupling for a function $f$ with Fourier transform supported in $\Nc_\delta(S_k)$, $k\ge 1$
$$\|f\|_{L^4(\R^3)}\lesssim_{\epsilon}\delta^{-\epsilon}|{\Pc}_{\delta}(S_k)|^{\frac14}(\sum_{\tau_{k,j}\in {\Pc}_\delta(S_k)}\|f_{\tau_{k,j}}\|^4_{L^4(\R^3)})^{1/4}.$$
The partition $\Pc_\delta(S)$ will be the union of all $\Pc_\delta(S_k)$, $k\ge 0$.

\section{The perturbed cone}
\label{s4}

To simplify notation we will assume
\begin{equation}
\label{18}
\gamma(r)=r+(r-1)^n+O((r-1)^{n+1}).
\end{equation}
We will use the decomposition into intervals $U_k$ from the previous section
$$[1,1+\Delta)=[1,1+\delta^{1/n}]\cup \cup_{k\ge 1} [1+2^{k-1}\delta^{1/n},1+2^{k}\delta^{1/n}].$$
We continue to denote by $S_k$ the part of $S$ corresponding to $U_k$, and to write $s_k=2^k\delta^{1/n}$.
\medskip

Let us deal first with $S_0$. Note that $\Nc_\delta(S_0)$ sits inside $\Nc_{O(\delta)}(\Cc^2)$, so we can use the cone decoupling from Theorem \ref{3} to produce the relevant partition $\Pc_\delta(S_0)$, consisting of  essentially  flat boxes of dimensions $\sim \delta^{1/n}\times \delta^{1/2}\times \delta$.

Next, we fix some $k\ge 1$  and assume $f$ has  Fourier transform supported inside $\Nc_\delta(S_k)$. We will decouple in two stages. The first one is similar to the case $k=0$. More precisely, note that $\Nc_\delta(S_k)\subset \Nc_{O(s_k^n)}(\Cc^2)$. This allows us to run a cone  decoupling
\begin{equation}
\label{34}
\|f\|_{L^4(\R^3)}\lesssim_{\epsilon}\delta^{-\epsilon}|\tilde{\Pc}_{\delta}(S_k)|^{\frac14}(\sum_{\tau_k\in\tilde{\Pc}_\delta(S_k)}\|f_{\tau_k}\|^4_{L^4(\R^3)})^{1/4}.
\end{equation}
Each $\tau_k\in\tilde{\Pc}_\delta(S_k)$ is equal to  $\Nc_\delta(\theta_k)$ for some cap $\theta_k$ of dimensions $\sim s_k^{n/2}\times s_k$.
It is worth observing that cylindrical decoupling would have led to much wider caps of dimensions $\sim s_k^{1/2}\times s_k$. The caps $\theta_k$ however are small enough to behave well under rescaling.

We next perform a finer decoupling for each $\tau_k$. It is  convenient to deal with $\theta_k$ sitting directly above the $\xi_2$ axis, so that a point $(\xi_1,\xi_2,\xi_3)\in\theta_{k}$ satisfies
$$|\xi_1|\lesssim s_k^{n/2},\;\;\;\;\xi_2-1\sim s_k,\;\;\;\;\xi_3-1\sim s_k.$$
To understand better how to rescale $\theta_k$, we rotate it with $\frac\pi 4$ about the $\xi_1$ axis and rescale the $\xi_2$ and $\xi_3$ variables by $\sqrt{2}$. Thus, the new coordinates satisfy
$$\begin{cases}
	\xi_1&=\xi_1'\\ \xi_2&={\xi_2'-\xi_3'}\\ \xi_3&=\xi_2'+\xi_3'.
\end{cases}$$
Using \eqref{18}, the equation of $\theta_k$ appears now in an implicit form
$$\xi_2'+\xi_3'=\sqrt{\xi_1'^2+(\xi_2'-\xi_3')^2}+(\sqrt{\xi_1'^2+(\xi_2'-\xi_3')^2}-1)^n+O((\sqrt{\xi_1'^2+(\xi_2'-\xi_3')^2}-1)^{n+1}),$$
with
\begin{equation}
\label{21}
|\xi_1'|\lesssim s_k^{n/2},\;\;\;\xi_2'\sim 1+s_k,\;\;\;\xi_3'\sim s_k^n.
\end{equation}
This can be rearranged as follows
$$\xi_3'=\frac{\xi_1'^2}{4\xi_2'}+\frac1{4\xi_2'}(\sqrt{\xi_1'^2+(\xi_2'-\xi_3')^2}-1)^n+\frac1{4\xi_2'}O((\sqrt{\xi_1'^2+(\xi_2'-\xi_3')^2}-1)^{n+1}).$$
Note that $\xi_3'=\frac{\xi_1'^2}{4\xi_2'}$ is the equation of the cone $\Cc^2$ in the new coordinates.
\bigskip

%\textbf{Figure 2.} A cap $\theta_k$ is depicted in two coordinate %systems

%\begin{center}
%	\includegraphics[scale=0.5]{F1.pdf}
%\end{center}

%\bigskip

We will use the transformation
$$L_k(\xi_1',\xi_2',\xi_3')=(\eta_1,\eta_2,\eta_3):=(\frac{\xi_1'}{s_k^{n/2}},\frac{\xi_2'-1}{s_k},\frac{\xi_3'-1}{s_k^n}).$$
Let us call $\theta_{k,new}=L_k(\theta_k)$. If we define $\xi_3'=\psi(\xi_1',\xi_2')$
then the equation of $\theta_{k,new}$ in the coordinates $\eta_1,\eta_2,\eta_3$ becomes
$$\eta_3=\psi_k(\eta_1,\eta_2):=\frac1{s_k^n}[\psi(s_k^{n/2}\eta_1,s_k\eta_2+1)-1],\;\;\;\;|\eta_1|,\;|\eta_2|\lesssim 1.$$
It remains to check that this surface satisfies the requirements in Theorem \ref{4}. More precisely, we have to show that the $C^3$ norm of $\psi_k$ is $O(1)$, independent of $k$. Also, we need to show that the Gaussian curvature is away from zero, uniformly over $k$.

\begin{lem}
Write
$$\phi(\xi_1',\xi_2')=\frac1{4\xi_2'}(\sqrt{\xi_1'^2+(\xi_2'-\psi(\xi_1',\xi_2'))^2}-1)^n+\frac1{4\xi_2'}O((\sqrt{\xi_1'^2+(\xi_2'-\psi(\xi_1',\xi_2'))^2}-1)^{n+1}).$$
Then for each $p,q\ge 0$ with
\begin{equation}
\label{20}
|\xi_1'|\lesssim s_k^{n/2},\;\;\;\;\xi_2'-1\sim s_k
\end{equation}
we have
$$|D_{1}^{p}D_2^{q}\phi(\xi_1',\xi_2')|\lesssim \min\{(s_k)^{n-p-q},1\}.$$
Consequently,  for each $p,q\ge 0$
$$\sup_{|\eta_1|,|\eta_2|\lesssim 1}|D_{1}^{p}D_2^{q}\psi_k(\eta_1,\eta_2)|\lesssim 1,$$
with an implicit constant independent of $k$.

\end{lem}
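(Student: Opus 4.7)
My strategy is to bound the derivatives of $\phi$ via Faà di Bruno's formula applied to the explicit representation $\phi = (4\xi_2')^{-1}[(R-1)^n + H(R)]$, where $H$ is analytic and vanishes to order $n+1$ at $R = 1$, and then to deduce the bound on $\psi_k$ by the chain rule. The essential inputs are $R - 1 \sim s_k$ on the region \eqref{20} together with uniform-in-$k$ derivative bounds $|D_1^p D_2^q R| \lesssim 1$.

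To secure these inputs I first note that $\psi$ is defined implicitly by $\xi_2' + \psi = \gamma(R(\xi_1', \xi_2', \psi))$, and the $\xi_3'$-derivative of this equation equals $1 + \gamma'(R)(\xi_2' - \psi)/R \sim 2$ on the region of interest. Hence the implicit function theorem produces $\psi$ as a fixed real analytic function on a neighborhood of $(0, 1)$ independent of $k$; in particular $|D_1^p D_2^q \psi| \lesssim_{p, q} 1$ uniformly in $k$, so that $R = \sqrt{\xi_1'^2 + (\xi_2' - \psi)^2}$ (which is bounded away from zero) enjoys the same type of bounds. A direct expansion using $\psi \sim s_k^n$, $\xi_2' - 1 \sim s_k$, $\xi_1'^2 \lesssim s_k^n$ and $n \geq 3$ yields $R^2 - 1 = 2(\xi_2' - 1) + O(s_k^2)$, hence $R - 1 \sim s_k$.

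Next I would write $H(r) = (r-1)^{n+1} \tilde{H}(r)$ with $\tilde{H}$ analytic and apply Faà di Bruno to $(R-1)^m$ for $m \in \{n, n+1\}$. Every term in the expansion of $D_1^p D_2^q[(R-1)^m]$ carries at least $\max(m - p - q, 0)$ undifferentiated factors of $(R-1)$ multiplied by bounded factors coming from derivatives of $R$, so
\[
|D_1^p D_2^q [(R-1)^m]| \lesssim (R-1)^{\max(m - p - q, 0)} \lesssim \min\{s_k^{m-p-q}, 1\}.
\]
Combining with Leibniz and the bounded derivatives of $(4\xi_2')^{-1}$ and $\tilde{H}(R)$ gives the asserted bound $|D_1^p D_2^q \phi| \lesssim \min\{s_k^{n-p-q}, 1\}$.

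Finally, splitting $\psi = \xi_1'^2/(4\xi_2') + \phi$, the first summand rescales under $L_k$ to $\eta_1^2/(4(1 + s_k \eta_2))$, which is smooth with derivative bounds independent of $k$. For the $\phi$-summand the chain rule gives
\[
\bigl|D_{\eta_1}^p D_{\eta_2}^q\bigl[s_k^{-n}\phi(s_k^{n/2}\eta_1, 1 + s_k\eta_2)\bigr]\bigr| \lesssim s_k^{pn/2 + q - n}\,|D_1^p D_2^q \phi| \lesssim s_k^{p(n/2 - 1)} \lesssim 1,
\]
the last inequality using $n \geq 3$. The only conceptually delicate point in the whole argument is the need for \emph{uniform-in-$k$} derivative control on $\psi$ and $R$, since $\phi$ depends implicitly on $\psi$; this is resolved once and for all by identifying $\psi$ with the graph function of the fixed real analytic surface, after which the lemma reduces to routine Faà di Bruno and Leibniz bookkeeping.
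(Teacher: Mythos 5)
Your proposal is correct and follows essentially the same route as the paper's (very terse) proof: establish $\xi_2'\sim 1$, $R-1\lesssim s_k$, and $C^\infty$-bounds on $\psi$ and $R$ uniformly in $k$, then bound derivatives of $\phi$ by repeated differentiation (Leibniz/Faà di Bruno), and finally rescale by $L_k$ using the chain rule. You fill in several details the paper glosses over — notably the uniform-in-$k$ derivative control on $\psi$ via the implicit function theorem applied to the fixed analytic constraint with $\partial_{\xi_3'}\neq 0$, and the Faà di Bruno bookkeeping showing each term retains at least $\max\{m-p-q,0\}$ undifferentiated factors of $R-1$; your treatment of the $\varphi$-summand by directly rescaling to $\eta_1^2/(4(1+s_k\eta_2))$ is slightly cleaner than the paper's case split on $0\le p\le 2$ versus $p\ge 3$, though both are immediate.
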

\begin{proof}
	It is clear that $\psi\in C^\infty$.
Note that due to \eqref{21} we have
$$\xi_2'\sim 1$$
and
$$|(\sqrt{\xi_1'^2+(\xi_2'-\psi(\xi_1',\xi_2'))^2}-1)^m|\lesssim s_k^m$$
for each $0\le m\le n$.
The bound on the derivatives of $\phi$ is now quite immediate, using repeated differentiation.

Next, recall that
$$\psi(\xi_1',\xi_2')=\varphi(\xi_1',\xi_2')+\phi(\xi_1',\xi_2'),$$
where
$$\varphi(\xi_1',\xi_2')=\frac{\xi_1'^2}{4\xi_2'}.$$
Note that in the domain \eqref{20} we have
$$|D_1^pD_2^q\varphi(\xi_1',\xi_2')|\lesssim (s_k^{n/2})^{2-p}$$
for each $0\le p\le 2$, $q\ge 0$
and the derivative becomes zero if $p\ge 3$.

Using all these observations, the desired bound on the derivatives of $\psi_k$ is now immediate.
\end{proof}
According to \eqref{22}, the Gaussian curvature of $\theta_{k,new}$ is roughly
$$Hess(\psi_k)=det\begin{bmatrix}\psi_{11}(s_k^{n/2}\eta_1,s_k\eta_2+1)
&\frac{(s_k)^{1+\frac{n}{2}}}{s_k^n}\psi_{12}(s_k^{n/2}\eta_1,s_k\eta_2+1)\\\frac{(s_k)^{1+\frac{n}{2}}}{s_k^n}\psi_{12}(s_k^{n/2}\eta_1,s_k\eta_2+1)&s_k^{2-n}\psi_{22}(s_k^{n/2}\eta_1,s_k\eta_2+1)
\end{bmatrix}.$$
It is immediate that $$Hess(\psi_k)=(s_k)^{2-n}Hess(\psi).$$
Another application of  \eqref{22} shows that $Hess(\psi)$ is roughly the Gaussian curvature of $\theta_k$, in the coordinates $(\xi_1',\xi_2',\xi_3')$. This is in turn comparable to the Gaussian curvature of $\theta_k$ in the original coordinates $(\xi_1,\xi_2,\xi_3)$. But \eqref{10} determines this curvature to be $\sim (s_k)^{n-2}$. We conclude that the curvature of $\theta_{k,new}$ is $\sim 1$, as desired.

\medskip

Note that $L_k$ maps $\Nc_\delta(\theta_k)$ inside $\Nc_{O(\frac{\delta}{s_k^n})}(\theta_{k,new})$. The rest of the argument is very similar to the one from the end of the previous section. We apply Theorem \ref{4} to partition $\Nc_{O(\frac{\delta}{s_k^n})}(\theta_{k,new})$ into essentially flat boxes with dimensions $\sim (\frac{\delta}{s_k^n})^{1/2}\times (\frac{\delta}{s_k^n})^{1/2}\times (\frac{\delta}{s_k^n}).$ Applying $L_k^{-1}$, this gives rise to a partition of $\Nc_\delta(\theta_k)$ into essentially flat boxes $\tau_{k,l}=\Nc_{\delta}(\theta_{k,l})$, where each $\theta_{k,l}$ has radial length $\sim s_k(\frac{\delta}{s_k^n})^{1/2}$ and angular length $\sim (s_k)^{n/2}(\frac{\delta}{s_k^n})^{1/2}$.

The desired partition  $\Pc_\delta(S_k)$ will consist of all boxes $\tau_{k,l}$ corresponding to all $\theta_k\in\tilde{\Pc}_\delta(S_k).$ This concludes the analysis of the perturbed cone.

\section{Final remarks}
\label{s5}

There are various ways in which one could refine the analysis in this paper. We have only aimed to prove a universal $l^4$ decoupling on the space $L^4$. A more careful inspection of the argument will reveal that sometimes  this can be naturally upgraded to an $l^2$ decoupling. For example, the torus \eqref{24} is positively curved on the outside ($r>1$) and negatively curved on the inside ($r<1$). Thus, the partition from Section \ref{3} leads in fact to an $l^2$ decoupling, while the analogous partition for the inside part leads only to an $l^4$ decoupling.

Also, the boxes in our partitions are maximal, subject to the requirement of being essentially flat. Under this mild  constraint  some surfaces perform better than  others. For example, we have seen earlier that the critical exponent for cone decoupling into plates is 6, rather than 4. Given a surface $S$, one may instead search for partitions consisting of boxes of smallest possible size, for which the $l^4$ decoupling holds. This issue seems to be much more delicate. For example,  one of the most interesting open  questions about surfaces in $\R^3$ is whether the cone can be decoupled into square-like caps. We conjecture the following.
 \begin{con}

 	 Let $\Pc_\delta(\Cc^2)$ be a partition of $\Nc_\delta(\Cc^2)$ into roughly $\delta^{-1}$ near rectangular boxes $\tau$ of dimensions $\sim \delta^{1/2}\times \delta^{1/2} \times \delta$. Then for each $f$ with Fourier transform supported in $\Nc_\delta(\Cc^2)$ and for $2\le p\le 4$ we have
 	$$
 \|f\|_{L^p(\R^3)}\lesssim_{\epsilon}(\delta^{-1})^{\frac12-\frac1p+\epsilon}(\sum_{\tau\in\Pc_\delta(\Cc^2)}\|f_\tau\|^p_{L^p(\R^3)})^{1/p}.
 $$

\end{con}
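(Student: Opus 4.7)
The plan is to first reduce the conjecture to its endpoint $p=4$ by interpolating with the trivial $p=2$ case, and then to attempt the endpoint by combining the two natural decoupling structures on $\Cc^2$, acknowledging that the resulting loss is the principal obstacle.

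At $p=2$ the inequality is Plancherel with constant $O(1)$, since the boxes $\tau$ have $O(1)$-overlapping Fourier supports. Viewing the decoupling bound as the operator norm of the linear map $\{g_\tau\}\mapsto\sum_\tau g_\tau$ from $l^p(L^p)$ to $L^p$ (with smooth Fourier multipliers enforcing the support condition), complex interpolation between the $p=2$ bound and the desired $p=4$ bound reduces the full range $2\le p\le 4$ to the endpoint
$$\|f\|_{L^4(\R^3)}\lesssim_\epsilon \delta^{-1/4-\epsilon}\Bigl(\sum_\tau\|f_\tau\|_{L^4(\R^3)}^4\Bigr)^{1/4}. \qquad (*)$$

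The most direct attempt on $(*)$ proceeds in two stages. First, apply the $l^4$ plate decoupling \eqref{8} to pass from $f$ to its restrictions $f_P$ to the $\sim\delta^{-1/2}$ plates $P$ of dimensions $1\times\delta^{1/2}\times\delta$, at the cost of a factor $\delta^{-\epsilon-1/8}$. Each plate is essentially a flat box whose long direction is a generator of $\Cc^2$. Partitioning $P$ into the $N=\delta^{-1/2}$ fine boxes $\tau\subset P$ is therefore a one-dimensional cut along this direction, and Fubini in the two transverse variables reduces the sub-decoupling to a one-dimensional partition of a unit interval into $N$ equal pieces. Combining Proposition \ref{5} with H\"older, or equivalently testing against a smooth approximation of the indicator of $P$, the sharp $l^4$ constant for this one-dimensional problem is $N^{1-2/4}=\delta^{-1/4}$. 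Concatenating the two stages yields
$$\|f\|_{L^4(\R^3)}\lesssim_\epsilon \delta^{-3/8-\epsilon}\Bigl(\sum_\tau\|f_\tau\|_{L^4(\R^3)}^4\Bigr)^{1/4},$$
which falls short of $(*)$ by a factor of $\delta^{-1/8}$.

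The hard part of the conjecture is eliminating this $\delta^{-1/8}$ deficit. Both individual losses above are sharp, so no refinement of the two-scale argument in isolation can succeed; the two scales must be handled simultaneously. A natural strategy is a Bourgain--Guth-type broad/narrow dichotomy at the angular scale $\delta^{1/2}$: in the narrow case a single angular sector of $\Cc^2$ dominates at $x$ and one appeals inductively via Lorentz/parabolic rescaling, which sends the sector to a full copy of the problem at a coarser $\delta$; in the broad case two angularly transverse sectors contribute comparably, permitting a bilinear $L^4$ restriction estimate for the cone that should recover the missing factor. The main obstacle is that the Gauss map of $\Cc^2$ has rank one, so three-linear transversality is unavailable and all gains must flow through bilinear inputs; the bilinear gain then has to be propagated through the induction in a way that correctly interacts with the flat Proposition \ref{5} direction along each generator. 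An alternative route is a direct C\'ordoba-type expansion of $\|f\|_{L^4}^4$ via Plancherel into a sum over additive quadruples $(\tau_1,\tau_2,\tau_3,\tau_4)$ of fine boxes with $\tau_1+\tau_2=\tau_3+\tau_4$; the requisite sharp incidence bound must carefully account for quadruples concentrated along generators, and this appears to be a cone-incidence problem of comparable depth to the conjecture itself.
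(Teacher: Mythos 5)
The statement you have been asked to address is a \emph{conjecture}, not a theorem, and the paper offers no proof of it. Immediately after stating it, the authors note that it is known only for $2\le p\le 3$, via trilinear restriction technology, and they describe whether the cone can be decoupled into square-like caps up to $p=4$ as one of the most interesting open questions about surfaces in $\R^3$. There is therefore no argument in the paper for your attempt to match or miss; the only fair comparison is to the paper's explicit acknowledgment that the problem is open.

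On its own terms, your attempt is honest and its accounting is correct. The two-stage argument --- plate decoupling \eqref{8} at cost $\delta^{-1/8-\epsilon}$, followed by flat one-dimensional decoupling along each generator at cost $N^{1/2}=\delta^{-1/4}$ via Proposition \ref{5} and H\"older --- does give $\delta^{-3/8-\epsilon}$, and the $\delta^{-1/8}$ deficit against the desired $\delta^{-1/4-\epsilon}$ is precisely the well-known gap. You are right that both stage losses are individually sharp (constructive alignment of plates for the first, a smoothed indicator of $\Nc_\delta$ of a generator for the second), so the obstruction is intrinsic to handling the two scales independently. Your reduction to the $p=4$ endpoint via operator interpolation with $p=2$ is fine as a logical reduction, though note that it is conditional and hence weaker than the trilinear-restriction argument the paper cites, which unconditionally gives the conjectured constant for all $2\le p\le 3$. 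The closing discussion of broad--narrow dichotomies and C\'ordoba-type quadruple counting is a reasonable survey of plausible strategies, and you correctly locate the difficulty in the rank-one Gauss map of $\Cc^2$; but, as you say yourself, none of it constitutes a proof. In short: the statement is open, the paper does not prove it, and neither does your attempt --- which is the expected outcome, and you are upfront about it.
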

The only range where this is known to hold is $2\le p\le 3$,  using trilinear restriction technology.


\begin{thebibliography}{99}
	
	\bibitem{BD3} Bourgain, J. and Demeter, C. {\em The proof of the $l^2$ Decoupling Conjecture}, Annals of Math. 182 (2015), no. 1, 351-389.
	\bibitem{BD4} Bourgain, J. and Demeter, C. {\em Decouplings for curves and hypersurfaces with nonzero Gaussian curvature},  J. Anal. Math. 133 (2017), 279-311
\end{thebibliography}
\end{document}